\numberwithin{equation}{section}
\newtheoremstyle{personal}%
{12pt}
{12pt}
{\slshape}
{}
{\bfseries}
{.}
{.5em}
{}
\theoremstyle{personal}%
\newtheorem{thm}{Theorem}[section]
\newtheorem{lem}[thm]{Lemma}
\newtheorem{quest}{Question}[section]
\theoremstyle{definition}
\newtheorem{rem}[thm]{Remark}
\newcommand{\N}{\mathds{N}}
\newcommand{\Z}{\mathds{Z}}
\newcommand{\R}{\mathds{R}}
\newcommand{\RP}{\mathds{R}\PP}
\newcommand{\PT}{\PP\Tan}
\newcommand{\PP}{\mathrm{P}}
\newcommand{\EE}{\mathcal{E}}
\newcommand{\diff}{\mathrm{d}}
\newcommand{\Tan}{\mathrm{T}}
\newcommand{\ev}{\mathrm{ev}}
\DeclareMathOperator*{\toup}{\longrightarrow}
\newcommand{\Diff}{\mathrm{Diff}}
\newcommand{\spec}{\sigma_{\mathrm{s}}}
\begin{document}

\title[A characterization of Zoll Riemannian metrics on the 2-sphere]{A characterization of Zoll\\ Riemannian metrics on the 2-sphere}

\author[M. Mazzucchelli]{Marco Mazzucchelli}
\address{Marco Mazzucchelli\newline\indent CNRS, \'Ecole Normale Sup\'erieure de Lyon, UMPA\newline\indent  46 all\'ee d'Italie, 69364 Lyon Cedex 07, France}
\email{marco.mazzucchelli@ens-lyon.fr}

\author[S. Suhr]{Stefan Suhr}
\address{Stefan Suhr\newline\indent Ruhr-Universit\"at Bochum, Fakult\"at f\"ur Mathematik\newline\indent
44780 Bochum, Germany}
\email{stefan.suhr@rub.de}

\date{November 30, 2017. \emph{Revised}: July 20, 2018.}
\subjclass[2010]{53C22, 58E10}
\keywords{Zoll metrics, closed geodesics, Lusternik-Schnirelmann theory}

\begin{abstract}
The simple length spectrum of a Riemannian manifold is the set of lengths of its simple closed geodesics. We prove a theorem claimed by Lusternik: in any Riemannian 2-sphere whose simple length spectrum consists of only one element $\ell$, any geodesic is simple closed with length $\ell$.
\end{abstract}

\maketitle

\section{Introduction}

A remarkable class of closed Riemannian manifolds is given by those all of whose geodesics are closed. A detailed account of the state of the art of the research on this subject up to the late 1970s is contained in the celebrated monograph of Besse \cite{Besse:1978pr}, while for more recent results we refer the reader to, e.g., \cite{Olsen:2010ne, Radeschi:2017dz, Abbondandolo:2017xz} and references therein. The round $n$-spheres are the simplest examples of manifolds in this class. The first non-trivial example of a 2-sphere of revolution all of whose geodesics are closed was given by Zoll \cite{Zoll:1903by}. The closed geodesics in this example are without self-intersections and have the same length. This is not accidental: a theorem of Gromoll and Grove \cite{Gromoll:1981kl} implies that every 2-sphere all of whose geodesics are closed is Zoll, meaning that all the geodesics are simple closed and have the same length. Our main result, which was claimed by Lusternik in \cite[page~82]{Ljusternik:1966tk}, shows that the property of being Zoll for a Riemannian 2-sphere $(S^2,g)$ can be read off from its simple length spectrum $\spec(S^2,g)$, that is, the set of lengths of its simple closed geodesics.
\begin{thm}\label{t:main}
In a Riemannian 2-sphere $(S^2,g)$ such that $\spec(S^2,g)=\{\ell\}$, every geodesic is simple closed and has length $\ell$. 
\end{thm}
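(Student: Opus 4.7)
The overall plan is to prove that every geodesic on $(S^2,g)$ is closed; once this is known, the Gromoll--Grove theorem cited in the introduction \cite{Gromoll:1981kl} immediately upgrades the conclusion to Zollness, so every geodesic becomes simple closed of the common length $\ell$. The engine for proving closedness will be Lusternik--Schnirelmann minimax theory applied to the length functional on a suitable space of simple closed curves on~$S^2$.

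\textbf{Minimax and the multiplicity principle.} I would begin with the Birkhoff--Lusternik--Schnirelmann setup on sweepouts of $S^2$ by simple closed curves, which produces three minimax values $c_1\le c_2\le c_3$, each realized as the length of a simple closed geodesic. Under the hypothesis $\spec(S^2,g)=\{\ell\}$ these three values must collapse to $c_1=c_2=c_3=\ell$. The Lusternik--Schnirelmann multiplicity principle then forces the set of simple closed geodesics of length $\ell$ to have category at least $3$ in the ambient space of sweepouts, and in particular to form a family of positive dimension --- morally a $2$-parameter family that should cover $S^2$.

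\textbf{From a large critical set to a foliation, and conclusion.} The next step is to promote this large critical set to a genuine statement that every unit tangent vector is tangent to some simple closed geodesic of length~$\ell$. Let $U\subset \Tan^1 S^2$ denote the set of such unit vectors. Closedness of $U$ follows from Arzel\`a--Ascoli combined with the hypothesis: any $C^0$--limit of simple closed geodesics of length $\ell$ is a closed geodesic of length $\ell$, and the single-length hypothesis, together with an intersection argument specific to $S^2$, rules out degeneration into a non-simple closed geodesic (e.g.\ a figure-eight or multiply covered loop) of the same length. Openness of $U$ uses the positive-dimensional critical family produced by the multiplicity principle, which should fill a neighborhood of any simple closed geodesic of length $\ell$ by nearby simple closed geodesics of the same length. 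Since $\Tan^1 S^2$ is connected, $U=\Tan^1 S^2$, hence every geodesic is simple closed of length $\ell$, and Gromoll--Grove concludes.

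\textbf{Main obstacle.} The hardest part will be the openness of $U$: translating the abstract conclusion ``the critical set has category at least~$3$'' into a concrete local foliation of $\Tan^1 S^2$ by simple closed geodesics of length $\ell$. This requires a precise understanding of how simple closed geodesics of a common length on $S^2$ fit together, presumably via a Bott-type clean-critical-manifold structure for the length functional at the single critical value $\ell$. The closedness step is also delicate, as one must exclude, using only the simple length spectrum hypothesis, that a convergent sequence of simple closed geodesics of length $\ell$ limits onto a non-simple closed geodesic of the same length.
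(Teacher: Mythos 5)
Your overall scaffolding (three Lusternik--Schnirelmann minimax values collapsing to $\ell$ under the hypothesis $\spec(S^2,g)=\{\ell\}$) matches the paper's starting point, but the step you yourself flag as the ``main obstacle'' --- openness of the set $U\subset \Tan^1S^2$ of unit vectors tangent to a simple closed geodesic of length $\ell$ --- is a genuine gap, and it cannot be closed along the lines you sketch. The multiplicity principle (category, or in the paper's cohomological form: $\kappa$ and $\kappa^2$ restrict nontrivially to \emph{every} neighborhood of the critical set) is a statement about arbitrary neighborhoods of the whole critical set; it gives no local structure whatsoever near an individual critical point. There is no reason for the length functional to admit a Bott-type clean critical manifold at the level $\ell$, and a ``positive-dimensional family'' in the sense of category does not fill out a neighborhood in $\Tan^1 S^2$: a priori the simple closed geodesics of length $\ell$ could form a compact set whose set of tangent directions has empty interior, in which case your open-closed argument on the connected space $\Tan^1S^2$ collapses. (The closedness half is delicate too, but the real missing idea is openness.)

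The paper avoids this local analysis entirely with a global cohomological argument. It forms the circle bundle $\EE=\{(\gamma,x):x\in\gamma\}$ over the space $\Lambda$ of embedded circles, with evaluation map $\ev:\EE\to\PT S^2$, and computes via the Gysin sequence over the set of great circles that $\kappa^2=\pi_*\ev^*\nu\neq0$ in $H^2(\Lambda;\Z_2)$, where $\nu$ generates $H^3(\PT S^2;\Z_2)$. If some direction $y\in\PT S^2$ were tangent to no simple closed geodesic, then $U:=\pi(\ev^{-1}(\PT S^2\setminus\{y\}))$ would be an open neighborhood of the critical set on which $\kappa^2$ restricts nontrivially (by the Lusternik--Schnirelmann theorem, using $\ell_1=\ell_2=\ell_3$), forcing $(\ev\circ\widetilde j)^*\nu\neq0$ and hence surjectivity of $\ev\circ\widetilde j$ onto $\PT S^2$ --- contradicting the fact that its image misses $y$. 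This is precisely the ingredient your proposal lacks: a mechanism converting cuplength information about neighborhoods of the critical set into the pointwise statement that every tangent direction is realized. Note also that once every direction is tangent to a simple closed geodesic of length $\ell$, uniqueness of geodesics through a given unit vector already yields the theorem, so the appeal to Gromoll--Grove is unnecessary.
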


Under a weaker assumption on the simple length spectrum, Lusternik also established the following easier statement. We will provide its precise proof in Section~\ref{s:proofs} for the reader's convenience.
\begin{thm}[\cite{Ljusternik:1966tk}, page 81]\label{t:ellipsoid}
Let $(S^2,g)$ be a Riemannian 2-sphere such that $\spec(S^2,g)$ has at most two elements. Then, for some $\ell\in\spec(S^2,g)$, every $x\in S^2$ lies on a simple closed geodesic of $(S^2,g)$ of length $\ell$.
\end{thm}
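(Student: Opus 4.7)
The plan is to carry out a point-constrained Lusternik--Schnirelmann minimax of the length functional. For each $x\in S^2$, let $\Lambda(x)$ denote the space of unparametrized simple closed curves through $x$, and define $\ell_*(x)$ to be the minimax value of length over a natural nontrivial sweep-out class on $\Lambda(x)$; concretely, this class is represented by a family of simple closed curves through $x$ that sweeps out $S^2$, analogous to the first Lusternik--Schnirelmann class used in the classical three-closed-geodesics picture. I would verify two properties: (i) that $\ell_*(x)$ is realized by a simple closed geodesic through $x$, so $\ell_*(x)\in\spec(S^2,g)$; and (ii) that the function $x\mapsto\ell_*(x)$ is continuous.

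Granted (i) and (ii), the conclusion follows by a connectedness pigeonhole. The hypothesis says $\spec(S^2,g)=\{a\}$ or $\spec(S^2,g)=\{a,b\}$ with $a<b$. By (i) and (ii), the level sets $\{\ell_*=a\}$ and $\{\ell_*=b\}$ are disjoint closed subsets whose union is $S^2$; since $S^2$ is connected, one of them is empty. Hence $\ell_*$ is constant with some value $\ell\in\spec(S^2,g)$, and every $x\in S^2$ lies on a simple closed geodesic of length $\ell$, as claimed.

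The main obstacle is the rigorous analysis of the point-constrained minimax. For the realization (i), the Lagrange multiplier associated to the constraint $x\in c$ could in principle force the minimax critical curve to have a corner at $x$; one must rule this out by showing that any corner at $x$ admits a small local round-off (still through $x$) which strictly decreases the length, contradicting criticality. For the continuity (ii), I would transport minimizing sweep-outs through $x$ to sweep-outs through a nearby $y$ via a $C^1$-small diffeomorphism of $S^2$ sending $x$ to $y$, picking up a length change of order $\mathrm{dist}(x,y)$, which yields continuity as $y\to x$. Both steps also require that the sweep-out class on $\Lambda(x)$ be nontrivial and that the minimax value not be swallowed by the stratum of constant or very short loops; these are standard topological checks once $\Lambda(x)$ is set up carefully. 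Once (i) and (ii) are in place, the connectedness pigeonhole concludes the proof.
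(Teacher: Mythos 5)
There is a genuine gap at step (i), and it is fatal to the approach as planned. The critical points of the length functional restricted to loops through a fixed point $x$ are the geodesic loops based at $x$: curves that are geodesic away from $x$ but may have a corner there. Your proposed fix --- a ``small local round-off (still through $x$) which strictly decreases the length'' --- does not exist: in the flat model, the broken path that enters the origin along one ray and leaves along another is the \emph{shortest} path through the origin joining its two endpoints, so a corner at the constraint point is a local \emph{minimum} of the constrained local problem and cannot be shortened while keeping the curve through $x$. (Shortcutting across the small angle does decrease length, but the shortcut no longer passes through $x$.) This is not a technicality that a cleverer deformation could repair: on a generic triaxial ellipsoid there are exactly three simple closed geodesics, which do not cover the surface, so the unconditional conclusion of your step (i) --- every point lies on a simple closed geodesic --- is false. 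Hence no correct argument can establish (i) without invoking the hypothesis on $\spec(S^2,g)$, which your plan does not use at that stage. A secondary but also serious issue is the deformation needed to run the constrained minimax in the class of \emph{embedded} loops: the naive length-gradient flow does not preserve embeddedness, while Grayson's curve shortening flow preserves embeddedness but not the constraint $x\in\gamma$, so there is no obvious flow with which to push a constrained sweep-out below its minimax level.

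The paper's mechanism is different and avoids the constrained problem entirely. It runs the \emph{unconstrained} Lusternik--Schnirelmann minimax with Grayson's flow to get three minmax values $\ell_1\le\ell_2\le\ell_3$, each realized by a simple closed geodesic; if $\spec(S^2,g)$ has at most two elements, the pigeonhole forces $\ell_1=\ell_2$ or $\ell_2=\ell_3$. The classical degeneracy principle (Theorem~\ref{t:LS}) then says that the critical set at level $\ell_2$ is topologically large: the class $\kappa\in H^1(\Lambda_*;\Z_2)$ restricts nontrivially to every neighborhood of it. If some $x$ lay on no simple closed geodesic of length $\ell_2$, the neighborhood $U=\{\gamma : x\notin\gamma\}$ would be contractible (retract $S^2\setminus\{x\}$ to the antipode), killing $\kappa|_U$ --- a contradiction. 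Your final connectedness pigeonhole is fine, but it is the realization step that needs the degenerate-critical-set argument rather than a point-constrained variational problem.
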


If one further assumes that the sectional curvature of the Riemannian metric takes values inside $[1/4,1]$, Theorems~\ref{t:main} and~\ref{t:ellipsoid} are a consequence of the following result of Ballmann, Thorbergsson and Ziller \cite[Theorem~A]{Ballmann:1983fv}. Consider a Riemannian $n$-sphere, with $n\geq 2$, whose sectional curvature $\kappa$ satisfies $1/4\leq\delta\leq \kappa \leq 1$. If all the (not necessarily simple) closed geodesics  with length in $[2\pi,2\pi\delta^{-1/2}]$ have at most two different length values, for one such length $\ell$ every point of the $n$-sphere lies on a closed geodesic of length $\ell$. If all the closed geodesics with length in $[2\pi,2\pi\delta^{-1/2}]$ have the same length, then all the geodesics are closed with the same length.

For any $r\in(0,1)$ sufficiently close to $1$, the ellipsoid of revolution 
\[E(r):=\big\{(x,y,z)\in\R^3\ \big|\ x^2+y^2+(z/r)^{2}=1 \big\}\] 
equipped with the Riemannian metric induced by the ambient Euclidean metric of $\R^3$ satisfies the assumptions of Theorem~\ref{t:ellipsoid}, but is not a Zoll 2-sphere. Indeed, the meridians of $E(r)$ are simple closed geodesics of the same length, and the only other simple closed geodesic is the equator, whose length is $1$. Nevertheless, we do not know whether Theorem~\ref{t:ellipsoid} is optimal.

\begin{quest}
On a Riemannian 2-sphere $(S^2,g)$ such that $\spec(S^2,g)$ has at most two elements, is there a length $\ell\in\spec(S^2,g)$ and a point $x\in S^2$ such that every geodesic going through $x$ is simple closed with length $\ell$?
\end{quest}

The proofs of Theorems~\ref{t:main} and~\ref{t:ellipsoid} build on the classical minmax recipe due to Lusternik and Schnirel\-mann \cite{Lusternik:1934km, Ballmann:1978rw} for detecting three simple closed geodesics on every Riemannian 2-sphere. A crucial ingredient for this recipe is a deformation that shrinks emdedded loops without creating self-intersections, which can be obtained by applying Grayson's curve shortening flow \cite{Grayson:1989ec}. We expect such a flow to be available also in the setting of reversible Finsler metrics. If this were the case, Theorems~\ref{t:main} and~\ref{t:ellipsoid} would extend to reversible Finsler metrics on the 2-sphere. 

We close the introduction by raising one more question related to Theorem~\ref{t:main}. Consider the unit tangent bundle $SS^2$ equipped with a contact form $\alpha$. The associated Reeb vector field $R$ on $SS^2$ is defined by $\alpha(R)\equiv1$ and $\diff\alpha(R,\cdot)\equiv0$. We say that $(SS^2,\alpha)$ is reversible when $\phi^* R=-R$, where $\phi:SS^2\to SS^2$ is the involution $\phi(x,v)=(x,-v)$.

\begin{quest}
Assume that all the periodic orbits of the Reeb vector field of a reversible $(SS^2,\alpha)$ have the same period. Is $(SS^2,\alpha)$ a Zoll contact manifold, namely such that all its Reeb orbits are periodic?
\end{quest}

\footnotesize
\subsection*{Acknowledgments.} We are grateful to Alberto Abbondandolo, who asked us the original question leading to Theorem~\ref{t:main}, and suggested to convert our homological proof in a cohomological one, which resulted in a dramatic simplification of the exposition. We also thank Wolfgang Ziller for pointing out to us the above mentioned result in \cite{Ballmann:1983fv}. Marco Mazzucchelli is partially supported by the ANR-13-JS01-0008-01 ``Contact spectral invariants''. Stefan Suhr is supported by the SFB/TRR 191 ``Symplectic Structures in Geometry, Algebra and Dynamics'', funded by the Deutsche Forschungsgemeinschaft. Part of this work was carried out during a visit of Marco Mazzucchelli at the Ruhr-Universit\"at Bochum in November 2017, funded by the SFB/TRR 191; both authors wish to thank the university for providing a wonderful working environment. 

\normalsize

\section{Lusternik-Schnirelmann theory}

In their celebrated work \cite{Lusternik:1934km}, Lusternik and Schnirelmann showed how to detect three simple closed geodesics on every Riemannian 2-sphere by applying variational methods. The original proof of this fact in~\cite{Lusternik:1934km} is known to have a gap. More specifically, the argument requires a deformation of the space of unparametrized embedded circles in the 2-sphere that shrinks all those circles that are not closed geodesics. The deformation provided by Lusternik and Schnirelmann is incomplete. Actually, constructing such a deformation by hand turned out to be highly non-trivial, and several authors proposed their solution in the second half of the 20th century. A particularly elegant one was provided by Grayson \cite{Grayson:1989ec} with its curve shortening flow. In this section, we are going to review the arguments leading to Lusternik-Schnirelmann's theorem in combination with Grayson's work. For the topological arguments, we will mainly follow \cite{Ballmann:1978rw}.

\subsection{Grayson's curve shortening flow}

Let $(S^2,g)$ be an oriented Riemannian 2-sphere. We denote by $\Omega\subset C^\infty(S^1,S^2)$ the space of embedded circles $\gamma:S^1\hookrightarrow S^2$, and by $\Omega_0\subset C^\infty(S^1,S^2)$ the space of constant maps. The group $\Diff(S^1)$ acts on $\Omega_*:=\Omega\cup\Omega_0$ by reparametrizations, i.e.
\begin{align*}
(f\cdot\gamma)(t)=\gamma(f(t)),\qquad
\forall f\in\Diff(S^1),\ \gamma\in\Omega,\ t\in S^1.
\end{align*}
We consider the space of unparametrized loops
$\Lambda_*:=\Omega_*/\Diff(S^1)$ endowed with the quotient Whitney $C^\infty$ topology, and its subsets $\Lambda:=\Omega/\Diff(S^1)$ and $\Lambda_0:=\Omega_0/\Diff(S^1)\equiv\Omega_0\cong S^2$. We also consider the length function
\begin{align*}
L:\Lambda_* \to [0,\infty),
\qquad
L(\gamma)=\int_{S^1} g(\dot\gamma(t),\dot\gamma(t))^{1/2} \diff t,
\end{align*}
which is continuous.

For each parametrized embedded circle $\gamma\in\Omega$, we denote by $\nu_\gamma$ its positive normal vector field, so that the ordered pair $\{\dot\gamma(s),\nu_{\gamma}(s)\}$ is an oriented orthonormal basis of the tangent space $\Tan_{\gamma(s)}S^2$ for each $s\in S^1$. We also denote by $\kappa_{\gamma}:S^1\to\R$ the signed curvature of $\gamma$, which is defined by 
\[\kappa_\gamma(s)=\frac{g(\nabla_s\dot\gamma,\nu_{\gamma}(s))}{\|\dot\gamma(s)\|_g^{2}}.\] 
Up to a sign, both $\nu_\gamma$ and $\kappa_\gamma$ are independent of the parametrization of $\gamma$; more precisely, for all $f\in\Diff(S^1)$, we have
\begin{align*}
\nu_{f\cdot\gamma}=\mathrm{sign}(\dot f)\,\nu_{\gamma}\circ f,
\qquad
\kappa_{f\cdot\gamma}=\mathrm{sign}(\dot f)\, \kappa_{\gamma}\circ f.
\end{align*}
In particular, the product  $\kappa_{\gamma}\nu_{\gamma}$ is completely independent of the parametrization of $\gamma$, that is,
\begin{align}\label{e:invariance}
\kappa_{f\cdot\gamma}(s) \nu_{f\cdot\gamma}(s)= \kappa_{\gamma}(f(s))\,\nu_{\gamma}(f(s)),
\qquad\forall s\in S^1.
\end{align}
Now, let us consider the parabolic partial differential equation 
\begin{equation}
\label{e:curve_shortening}
 \partial_t \gamma_t(s) = \kappa_{\gamma_t}(s)\,\nu_{\gamma_t}(s)
\end{equation}
with initial condition $\gamma_0=\gamma$. By~\eqref{e:invariance}, this equation is parametrization invariant. Namely, if $\gamma_t\in\Omega$ is a solution of 
\eqref{e:curve_shortening}, for each $f\in\Diff(S^1)$ the family of curves $\zeta_t:=f\cdot\gamma_t$ is a solution of the same equation with initial condition $\zeta_0=f\cdot\gamma$. Therefore, we can view~\eqref{e:curve_shortening} as a recipe that prescribes the evolution of unparametrized embedded circles $\gamma\in\Lambda$.

The local existence, uniqueness, and continuous dependence on the initial condition in the $C^\infty$ topology of the solutions of~\eqref{e:curve_shortening} is well known by the standard theory of parabolic partial differential equations  (see, e.g., \cite[Theorem 1.1]{ManteMarti} for a modern account). In his fundamental paper \cite{Grayson:1989ec}, Grayson studied the long-term existence and several properties of the solutions of \eqref{e:curve_shortening}.  Summing up, there is an open neighborhood $J\subset[0,\infty)\times \Lambda$ of $\{0\}\times\Lambda$ and a continuous map $\phi:J\to\Lambda$ encoding the solutions of \eqref{e:curve_shortening}, in the sense that $\phi(t,\gamma)=\phi_t(\gamma):=\gamma_t$. Such map $\phi$ is referred to in the literature as the curve shortening flow, and satisfies the following properties. For each $\gamma\in\Lambda$, we denote by $\tau_\gamma\in(0,\infty]$ the largest extended real number such that $[0,\tau_\gamma)\times \{\gamma\} \subset J$. 
\begin{itemize}
\item[(i)] For all $(t,\gamma)\in J$ we have 
\[\frac{\diff}{\diff t} L(\phi_t(\gamma)) = -\int_{S^1} \kappa_{\gamma_t}(s)^2\|\dot\gamma_t(s)\|_g \diff s \leq0, \] 
with equality if and only if $\gamma$ is a closed geodesic (notice that in the integrand above we have introduced a parametrization of $\gamma_t\in\Lambda$, but the value of the integral is independent of this choice); see \cite[page 75]{Grayson:1989ec}.

\item[(ii)] For each $\gamma\in \Lambda$, the limit
\begin{align*}
\ell_{\gamma}:=\lim_{t\to\tau_\gamma} L(\phi_t(\gamma))
\end{align*}
exists; if $\tau_\gamma<\infty$ then $\ell_\gamma=0$ and $\phi_t(\gamma)$ converges to some constant curve in $\Lambda_0$ as $t\to\tau_\gamma$; otherwise, $\ell_\gamma>0$ and, for each open neighborhood $U\subset\Lambda$ of the set of simple closed geodesics of length $\ell_\gamma$ and for all $t>0$ large enough, $\phi_t(\gamma)$ belongs to $U$; see \cite[Theorem 0.1]{Grayson:1989ec}.

\item[(iii)] Let $U\subset\Lambda$ be an open neighborhood of the subspace of simple closed geodesics of length $\ell$; there exists $\epsilon=\epsilon(U)>0$ such that, for every compact subset $K\subset\{L\leq \ell+\epsilon\}$, there exists a continuous function $\tau:K\to[0,\infty)$ such that $\phi_{\tau(\gamma)}(\gamma)\in\{L<\ell\}\cup U$ for all $\gamma\in K$; see \cite[Lemma 8.1]{Grayson:1989ec}.
\end{itemize}

\subsection{The fundamental group of $\Lambda_*$}

Let us construct a 2-fold covering map \[\pi:C\to \Lambda_*.\] The idea of this construction goes as follows. Above the subspace of embedded circles $\Lambda\subset\Lambda_*$, the total space $\pi^{-1}(\Lambda)$ is precisely the space of embedded compact disks in $S^2$, and the projection $\pi$ sends a compact disk to its boundary curve; above any constant $\gamma\in\Lambda_0$, one element of $\pi^{-1}(\gamma)$ must be thought as the collapsed disk at the point $\gamma$, whereas the other element must be thought as the compact disk that fills $S^2$ and whose boundary has been collapsed to $\gamma$. Let us now provide the formal construction of this covering space.

For each $\gamma\in\Lambda_*$ we define a set of two elements $C_\gamma$ as follows: if $\gamma\in\Lambda$, we define $C_\gamma:=\pi_0(S^2\setminus\gamma)$ to be the set of path-connected components of its complement; otherwise, if $\gamma\in\Lambda_0$, we simply set $C_\gamma:=\Z_2=\Z/2\Z$. We set 
\[C:=\big\{(\gamma,Q)\ \big|\ \gamma\in\Lambda_*,\ Q\in C_\gamma\big\}.\] 
We endow $C$ with a topology, by defining a fundamental system of open neighborhoods of any point $(\gamma,Q)\in C$ as follows. 
\begin{itemize}
\item Assume first that $\gamma\in\Lambda$, so that $Q$ is a connected component of $S^2\setminus\gamma$, and choose an arbitrary point $x\in Q$. Let $U\subset\Lambda$ be a sufficiently small open neighborhood of $\gamma$ so that, for each $\zeta\in U$,  $x$ does not lie on the curve $\zeta$. For every such $\zeta$, we set $Q_\zeta\in\pi_0(S^2\setminus\zeta)$ to be the connected component containing $x$.

\item Assume now that $\gamma\in\Lambda_0$, so that $Q\in\Z_2$. We choose an arbitrary point $x\in S^2\setminus\gamma$, and as before a sufficiently small open neighborhood $U\subset\Lambda_*$ of $\gamma$ such that, for every $\zeta\in U$, $x$ does not lie on  $\zeta$. For each $\zeta\in U\cap\Lambda_0$, we set $Q_\zeta:=Q$. For each $\zeta\in U\cap\Lambda$, if $Q=1$ we set $Q_\zeta\in\pi_0(S^2\setminus\zeta)$ to be the  connected component containing $x$, whereas if $Q=0$ we set $Q_\zeta$ to be the connected component not containing $x$.
\end{itemize}

In both cases, we declare $U':=\big\{ (\zeta,Q_\zeta)\ |\ \zeta\in U \}\subset C$ to be an open neighborhood of $(\gamma,Q)$. With this topology, $C\to \Lambda_*$ is a 2-fold covering map with projection $(\gamma,Q)\mapsto\gamma$.

Let $\gamma_0\in\Lambda_0$ be a constant curve. We employ the covering $C$ to define a group homomorphism 
\begin{align}\label{e:A}
A : \pi_1(\Lambda_*,\gamma_0) \to \Z_2 
\end{align}
as follows. Consider a continuous loop $\Gamma:[0,1]\to\Lambda_*$ with $\Gamma(0)=\Gamma(1)=\gamma_0$. We lift $\Gamma$ to a continuous path $\widetilde\Gamma:[0,1]\to C$ such that $\widetilde\Gamma(0)=(\Gamma(0),0)$ and the following diagram commutes
\begin{align*}
 \xymatrix{
      & C \ar[d] \\
    [0,1] \ar[ur]^{\widetilde\Gamma}\ar[r]^{\Gamma} & \Lambda_*
  }
\end{align*}
We write $\widetilde\Gamma(t)=:(\Gamma(t),Q(t))$, and set $A([\Gamma]):=Q(1)$. The fact that $A([\Gamma])$ only depends on the homotopy class $[\Gamma]\in\pi_1(\Lambda_*,\gamma_0)$ readily follows from the homotopy lifting property of the covering $C\to\Lambda_*$. The homomorphism property \[A([\Gamma']*[\Gamma''])=A([\Gamma'])+A([\Gamma''])\] is a consequence of the following observation: if $\widetilde\Gamma_0:[0,1]\to C$ and $\widetilde\Gamma_1:[0,1]\to C$ are the two lifts of a continuous loop $\Gamma:S^1\to\Lambda_*$ as above with $\widetilde\Gamma_0(0)=(\Gamma(0),0)$ and $\widetilde\Gamma_1(0)=(\Gamma(0),1)$, then if we write $\widetilde\Gamma_i(t)=:(\Gamma(t),Q_i(t))$ we have $Q_0(1)=1-Q_1(1)$. It is not hard to see that the homomorphism $A$ is surjective (see the proof of  Lemma~\ref{l:pi_1_injective}).

\subsection{Three subordinate homology classes}
As usual, we denote by $B^{n+1}$ the closed unit ball in $\R^{n+1}$, by $S^{n}=\partial B^{n+1}$ the unit $n$-sphere, and by $\RP^n=S^n/\sim$ the $n$-dimensional real projective space, where $x\sim-x$ for each $x\in S^n$. We will always identify $\R^{n-1}$ with the hyperplane $\R^{n-1}\times\{0\}\subset\R^n$, so that in the sequence $S^0\subset S^1\subset S^2$ each sphere is the equator of the next one, and analogously we have the sequence of inclusions $\RP^0\subset\RP^1\subset \RP^2$. We denote by $E$ the space
\begin{align*}
E := \big\{ ([x],\lambda x)\in\RP^2\times B^{3}\ \big|\ x\in S^2,\ \lambda\in[-1,1]\big\},
\end{align*}
which is the total space of the tautological unit-ball bundle over $\RP^2$ with projection
$p:E\to\RP^2$, $p([x],\lambda x)=[x]$.
We recall that the cohomology ring of $E$ with coefficients in $\Z_2$ is given by $H^*(E;\Z_2)=\Z_2[\omega]/(\omega^{3})$, where $\omega$ is the generator of $H^1(E;\Z_2)\cong\Z_2$. Let $\tau\in H^1(E,\partial E;\Z_2)$ be the Thom class of the tautological bundle, which gives the Thom isomorphism
\begin{align*}
H^j(E;\Z_2)  \toup^{\cong} \,& H^{j+1}(E,\partial E;\Z_2),\\
\alpha  \longmapsto \,& \tau\smallsmile\alpha.
\end{align*}
In particular, $H^{*}(E,\partial E;\Z_2)$ is generated as a group by the cohomology classes $\tau\smallsmile\omega^j$, for $j=0,1,2$. Let $h_3$ be the (non-zero) homology class in $H_3(E,\partial E;\Z_2)$ such that $(\tau\smallsmile\omega^2)(h_3)=1$. We define the non-zero homology classes 
\begin{align*}
h_{2}:=\omega\smallfrown h_{3}\in H_2(E,\partial E;\Z_2),\\ 
h_{1}:=\omega\smallfrown h_{2}\in H_1(E,\partial E;\Z_2).
\end{align*}
We now define a map 
\begin{align}\label{e:iota}
\iota: (E,\partial E)\to (\Lambda_*,\Lambda_0) 
\end{align}
as follows: for each $([x],v)\in E$, the (possibly constant) loop $\iota([x],v)$ is given  by the intersection of $S^2$ with the affine plane 
$P([x],v):=\mathrm{span}\{x\}^\bot + v \subset \R^{3}$,
see Figure~\ref{f:iota}.
\begin{figure}
\begin{center}
\begin{small}
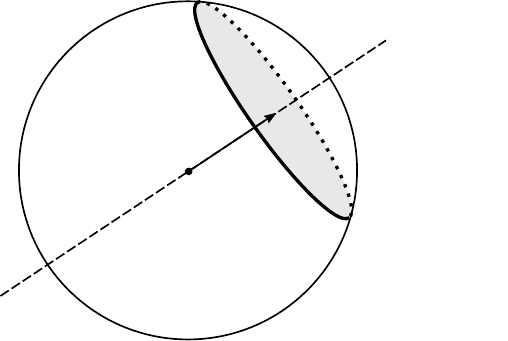 
\caption{The map $\iota:(E,\partial E)\to(\Lambda_*,\Lambda_0)$.}
\label{f:iota}
\end{small}
\end{center}
\end{figure}

\begin{lem}\label{l:pi_1_injective}
For any $e_0\in\partial E$ with image $\gamma_0:=\iota(e_0)$, the map $\iota$ induces  injective homomorphisms
$\iota_*  :\pi_1(E,e_0)\hookrightarrow\pi_1(\Lambda_*,\gamma_0)$ and $\iota_* :H_1(E;\Z_2)\hookrightarrow H_1(\Lambda_*;\Z_2)$.
\end{lem}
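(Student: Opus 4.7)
The plan is to detect the image of the generator of $\pi_1(E,e_0)$ under $\iota_*$ by pairing with the homomorphism $A$ from~\eqref{e:A}. Since $E$ deformation retracts onto its zero section $\RP^2$ via $([x],v)\mapsto([x],0)$, I have $\pi_1(E,e_0)\cong\pi_1(\RP^2)\cong\Z_2$ and $H_1(E;\Z_2)\cong\Z_2$, and the composition of the Hurewicz map with the change of coefficients $\pi_1(E,e_0)\to H_1(E;\Z_2)$ sends the generator to the generator. Moreover, the double covering $C\to\Lambda_*$ is classified by a cohomology class in $H^1(\Lambda_*;\Z_2)\cong\mathrm{Hom}(H_1(\Lambda_*;\Z_2),\Z_2)$ whose value on the Hurewicz image of any loop recovers $A$ of that loop. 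Consequently, both injectivity claims reduce at once to exhibiting a loop $\Gamma$ in $E$ based at some convenient $e_0\in\partial E$ that represents the nontrivial class in $\pi_1(E,e_0)$ and satisfies $A([\iota\circ\Gamma])=1$.

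To build such a $\Gamma$, I would fix $x_0\in S^2$, set $e_0:=([x_0],x_0)\in\partial E$ (so $\gamma_0:=\iota(e_0)$ is the constant loop at $x_0$, since the tangent plane to $S^2$ at $x_0$ meets the sphere only at $x_0$), and choose a smooth arc $x:[0,1]\to S^2$ from $x_0$ to $-x_0$. Let $\Gamma$ be the concatenation of (i) the radial segment $t\mapsto([x_0],(1-t)x_0)$ from $e_0$ to $([x_0],0)$, (ii) the loop $s\mapsto([x(s)],0)$ at $([x_0],0)$, and (iii) the reverse of~(i). The projection of $\Gamma$ to $\RP^2$ is homotopic to the generator $s\mapsto[x(s)]$ of $\pi_1(\RP^2)$, so $[\Gamma]$ is the nontrivial element of $\pi_1(E,e_0)$. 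Geometrically, $\iota\circ\Gamma$ opens the constant loop at $x_0$ into latitudinal circles that grow to the equator $\{x_0\}^\perp$, rotates it through the one-parameter family of great circles perpendicular to $x(s)$ as $s$ travels from $0$ to $1$, and finally collapses the equator back through latitudinal circles to the point $x_0$.

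Then I would compute the lift $\widetilde\Gamma:[0,1]\to C$ starting at $(\gamma_0,0)$, using the fixed test point $-x_0\in S^2\setminus\{x_0\}$ in the local description of $C$ near $\Lambda_0$. By that description, starting at $(\gamma_0,0)$ forces the lift on segment~(i) to track, for each nearby embedded circle, the component of the complement \emph{not} containing $-x_0$; this is the small cap around $x_0$, which grows into the closed hemisphere $\{y\in S^2:\langle y,x_0\rangle\geq 0\}$ at the end of~(i). Continuity along~(ii) then forces the selected hemisphere to be $\{y\in S^2:\langle y,x(s)\rangle\geq 0\}$; at $s=1$ this is $\{y\in S^2:\langle y,x_0\rangle\leq 0\}$, the \emph{opposite} hemisphere to the one reached at the end of~(i). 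Along~(iii) the lift therefore tracks the cap \emph{containing} $-x_0$, which shrinks at the endpoint to the element $(\gamma_0,1)$. Hence $A([\iota\circ\Gamma])=1$, and by the reductions of the first paragraph $\iota_*$ is injective on both $\pi_1$ and $H_1(-;\Z_2)$.

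The conceptual heart of the argument is the swap of the two hemispheres of $\{x_0\}^\perp$ induced by the half-turn from $x_0$ to $-x_0$, which reflects the nontriviality of the tautological line bundle over $\RP^2$: its restriction to a noncontractible loop is a M\"obius band whose boundary is connected rather than split. The delicate step I expect to require most care is the bookkeeping at the endpoints of $\Gamma$, where the embedded circles degenerate to points and the conventions for $C$ above $\Lambda_0$ must be invoked to identify the limiting element of $C$; with the single test point $-x_0$ consistently used at both endpoints, this bookkeeping is direct but must be done with the definitions open.
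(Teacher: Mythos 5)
Your proof is correct and follows essentially the same route as the paper: both detect the generator of $\pi_1(E,e_0)$ by pushing a (conjugated) loop in the zero section into $\Lambda_*$ and showing the homomorphism $A$ of~\eqref{e:A} evaluates to $1$ on it, then deduce the $H_1(\cdot;\Z_2)$ statement from the fact that $A$ factors through $H_1(\Lambda_*;\Z_2)$. The only difference is that you carry out explicitly the lift computation (the hemisphere swap) that the paper dismisses with ``we readily see,'' which is a welcome addition rather than a divergence.
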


\begin{proof}
Let $\Gamma:S^1\to E$ be a continuous loop such that $\Gamma(0)=e_0$ and the homotopy class $[\Gamma]$ generates the fundamental group $\pi_1(E,e_0)$. For instance we can define $\Gamma$ as follows. Let $\Psi:S^1\to\RP^1\subset \RP^2$ be a continuous map of degree 1. We can see $\Psi$ as a loop in the 0-section of $E$, so that is represents a generator of the fundamental group of $\pi_1(E,\Psi(0))$. Let $\Theta:[0,1]\to E$ be a continuous path that joins $e_0$ with $\Psi(0)$. We can set $\Gamma$ to be the loop obtained by the concatenation $\Theta * \Psi * \overline{\Theta}$. Notice that $\iota\circ\Psi$ is the loop of meridians of the sphere $S^2$ that starts at a meridian and applies to it a rotation of angle $\pi$ around the axis passing through the north and south poles. We readily see that $A\circ\iota_*([\Gamma])=1$, where $A$ is the homomorphism in \eqref{e:A}. In particular, $\iota_*:\pi_1(E,e_0)\to\pi_1(\Lambda_*,\gamma_0)$ is non-trivial. Since $\pi_1(E,e_0)\cong\Z_2$, $\iota_*$ is injective and the composition $A\circ\iota_*$ is an isomorphism. Moreover, $\iota_*([\Gamma])$ does not belong to the commutator subgroup $[\pi_1(\Lambda_*,\gamma_0),\pi_1(\Lambda_*,\gamma_0)]$, for otherwise it would belong to the kernel of $A$. Therefore, by Hurewicz Theorem, $\iota\circ\Gamma$ represents a non-zero element of the homology group $H_1(\Lambda_*;\Z_2)$, and therefore the homology homomorphism $\iota_*:H_1(E;\Z_2)\hookrightarrow H_1(\Lambda_*;\Z_2)$ is injective.
\end{proof}

\begin{lem}\label{l:non_triviality}
The map $\iota$ induces an injective homomorphism
\begin{align*}
\iota_* & : H_1(E,\partial E;\Z_2)\hookrightarrow H_1(\Lambda_*,\Lambda_0;\Z_2).
\end{align*}
\end{lem}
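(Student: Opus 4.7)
The plan is to deduce injectivity of $\iota_*$ on the relative $H_1$ from the injectivity on the absolute $H_1$ (already given by Lemma~\ref{l:pi_1_injective}), via a naturality argument applied to the long exact sequences of the pairs $(E,\partial E)$ and $(\Lambda_*,\Lambda_0)$.

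First I would identify the boundary and the constant-loop locus topologically. The map $\partial E\to S^2$, $([x],v)\mapsto v$, is a homeomorphism, so $\partial E$ is simply connected and in particular $H_1(\partial E;\Z_2)=0$. Likewise $\Lambda_0\cong S^2$, hence $H_1(\Lambda_0;\Z_2)=0$. Furthermore $E$ is homotopy equivalent to $\RP^2$, so $H_0(E;\Z_2)\cong\Z_2$ and the map $H_0(\partial E;\Z_2)\to H_0(E;\Z_2)$ is an isomorphism (both spaces are connected).

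Next I would write down the two long exact sequences and use naturality of the connecting map with respect to $\iota$:
\begin{align*}
\xymatrix{
H_1(\partial E;\Z_2) \ar[r]\ar[d]^{\iota_*}
& H_1(E;\Z_2) \ar[r]^-{j_*}\ar[d]^{\iota_*}
& H_1(E,\partial E;\Z_2) \ar[r]\ar[d]^{\iota_*}
& H_0(\partial E;\Z_2) \ar[d]^{\iota_*}\\
H_1(\Lambda_0;\Z_2) \ar[r]
& H_1(\Lambda_*;\Z_2) \ar[r]^-{J_*}
& H_1(\Lambda_*,\Lambda_0;\Z_2) \ar[r]
& H_0(\Lambda_0;\Z_2)
}
\end{align*}
From the top row, $H_1(\partial E;\Z_2)=0$ and the rightmost arrow is an isomorphism, so exactness forces $j_*$ to be an isomorphism. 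From the bottom row, $H_1(\Lambda_0;\Z_2)=0$ implies that $J_*$ is injective.

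Finally, by Lemma~\ref{l:pi_1_injective} the left vertical $\iota_*:H_1(E;\Z_2)\hookrightarrow H_1(\Lambda_*;\Z_2)$ is injective. The composition $J_*\circ\iota_* = \iota_*\circ j_*$ is therefore an injection $H_1(E;\Z_2)\hookrightarrow H_1(\Lambda_*,\Lambda_0;\Z_2)$; since $j_*$ is an isomorphism, the relative $\iota_*:H_1(E,\partial E;\Z_2)\to H_1(\Lambda_*,\Lambda_0;\Z_2)$ must itself be injective, which is the statement.

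I do not expect a real obstacle in this argument: everything is formal once one observes that $\partial E$ and $\Lambda_0$ are simply connected. The only step that requires a moment's thought is the identification $\partial E\cong S^2$, but this is immediate from the definition of $E$ as the tautological disk bundle, since the map $([x],v)\mapsto v$ is a bijection onto $S^2$.
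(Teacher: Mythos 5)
Your argument is correct and is essentially the paper's own proof: both deduce the relative statement from Lemma~\ref{l:pi_1_injective} by comparing the long exact sequences of $(E,\partial E)$ and $(\Lambda_*,\Lambda_0)$, using that $\partial E$ and $\Lambda_0$ are copies of $S^2$. (One cosmetic slip: the ``rightmost arrow'' that is an isomorphism is $H_0(\partial E;\Z_2)\to H_0(E;\Z_2)$, which you correctly identified earlier but did not draw in your diagram; the arrow actually drawn rightmost is the connecting homomorphism, which is what you are proving to be zero.)
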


\begin{proof}
Since $\partial E$ and $\Lambda_0$ are homeomorphic to $S^2$, the long exact sequences of the pairs $(E,\partial E)$ and $(\Lambda_*,\Lambda_0)$ imply that the inclusions induce isomorphisms 
\begin{align*}
H_1(E;\Z_2)\toup^{\cong} H_1(E,\partial E;\Z_2),
\qquad
H_1(\Lambda_*;\Z_2)\toup^{\cong} H_1(\Lambda_*,\Lambda_0;\Z_2). 
\end{align*}
Lemma~\ref{l:pi_1_injective}, together with the commutative diagram
\begin{align*}
  \xymatrix{
     H_1(E;\Z_2)\Big. \ar[r]^{\cong\ \ \ } \ar@{^{(}->}[d]^{\iota_*}& H_1(E,\partial E;\Z_2) \ar[d]^{\iota_*} \\
    H_1(\Lambda_*;\Z_2) \ar[r]^{\cong\ \ \ } & H_1(\Lambda_*,\Lambda_0;\Z_2)
  }
\end{align*}
implies the injectivity of $\iota_*  : H_1(E,\partial E;\Z_2)\hookrightarrow H_1(\Lambda_*,\Lambda_0;\Z_2)$.
\end{proof}

Consider again the generator $\omega$ of $H^1(E;\Z_2)\cong\Z_2$. Since the homomorphism $\iota_*:H_1(E;\Z_2)\hookrightarrow H_1(\Lambda_*;\Z_2)$ is injective, there exists a cohomology class 
\begin{align}\label{e:kappa}
\kappa\in H^1(\Lambda_*;\Z_2) 
\end{align}
such that $\iota^*\kappa=\omega$. This implies that
\begin{equation}\label{e:cap_product_relations}
\begin{split}
\iota_*h_1&=\iota_*( (\iota^*\kappa)\smallfrown h_2) = \kappa\smallfrown\iota_*h_2,\\
\iota_*h_2&=\iota_*( (\iota^*\kappa)\smallfrown h_3) = \kappa\smallfrown\iota_*h_3,
\end{split} 
\end{equation}
and since $\iota_*h_1$ is nonzero, the homology classes $\iota_*h_2$ and $\iota_*h_3$ are non-trivial in $H_*(\Lambda_*,\Lambda_0;\Z_2)$ as well.

\subsection{Lusternik-Schnirelmann minmax values}

For each value $\ell\geq0$, we denote by $\{L\leq\ell\}\subset\Lambda_*$ the corresponding sublevel set of the length functional, and by  $j^\ell:(\{L\leq\ell\},\Lambda_0)\hookrightarrow(\Lambda_*,\Lambda_0)$ the inclusion map. 
The so-called Lusternik-Schnirelmann minmax values are given by
\begin{align*}
\ell_i:= \inf \big\{\ell>0\ \big|\ \iota_*h_i\in j^\ell_* \big(H_i(\{L\leq\ell\},\Lambda_0)\big) \big\},\qquad i=1,2,3.
\end{align*}

\begin{lem}
$\ell_1>0$.
\end{lem}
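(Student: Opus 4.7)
The plan is to produce a threshold $\ell_0>0$ below which the inclusion-induced map $j^{\ell_0}_*:H_1(\{L\leq\ell_0\},\Lambda_0;\Z_2)\to H_1(\Lambda_*,\Lambda_0;\Z_2)$ vanishes. Combined with the non-triviality $\iota_*h_1\neq 0$ from Lemma~\ref{l:non_triviality}, this forces $\iota_*h_1$ outside the image of $j^{\ell_0}_*$ and hence $\ell_1\geq\ell_0>0$. The vanishing of $j^{\ell_0}_*$ will be secured by showing that $j^{\ell_0}$ is homotopic, as a map of pairs into $(\Lambda_*,\Lambda_0)$, to a map whose image lies entirely in $\Lambda_0$; any such map factors through $H_1(\Lambda_0,\Lambda_0;\Z_2)=0$ and therefore induces zero on $H_1$.

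To build this homotopy, I would fix $\rho>0$ smaller than the convexity radius of $(S^2,g)$, so that every open ball of radius $\rho$ is strongly geodesically convex and contained in a normal neighborhood of each of its points. Pick $\ell_0<\rho$: every $\gamma\in\Lambda\cap\{L\leq\ell_0\}$ then has intrinsic diameter at most $\ell_0/2<\rho/2$ and is contained in such a convex ball. I would assign to $\gamma$ its Karcher mean $c(\gamma)\in S^2$ with respect to the normalised arclength measure on $\gamma(S^1)$; the minimiser exists, is unique, and depends continuously on $\gamma$ because its support lies inside a geodesically convex ball. Extending by $c(\gamma):=\gamma$ on $\Lambda_0$ yields a continuous map $c:\{L\leq\ell_0\}\to S^2$, since as $\gamma\to\gamma_0\in\Lambda_0$ the corresponding arclength probability measures converge weakly to the Dirac mass at $\gamma_0$.

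I would then define the desired homotopy
\[H:[0,1]\times\{L\leq\ell_0\}\longrightarrow\Lambda_*,\qquad H(s,\gamma)(t):=\exp_{c(\gamma)}\!\bigl((1-s)\exp^{-1}_{c(\gamma)}\gamma(t)\bigr),\]
with $H(s,\gamma):=\gamma$ for $\gamma\in\Lambda_0$. In geodesic normal coordinates at $c(\gamma)$ this is the linear homothety of ratio $1-s$, so $H(s,\gamma)$ remains embedded for $s\in[0,1)$ and collapses to the constant $c(\gamma)\in\Lambda_0$ at $s=1$. The map $H$ is jointly continuous in $(s,\gamma)$, satisfies $H(0,\cdot)=j^{\ell_0}$ and $H(1,\cdot)\subset\Lambda_0$, and preserves $\Lambda_0$ throughout; hence $j^{\ell_0}$ is homotopic as a map of pairs to a map into $\Lambda_0$, giving $j^{\ell_0}_*=0$ as required.

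The principal technical hurdle will be establishing the joint continuity of $H$ across the stratification $\Lambda_*=\Lambda\cup\Lambda_0$: the arclength probability measures degenerate as $L(\gamma)\to 0$, so the Karcher mean construction must be checked to extend to constants by the correct point value, and the composition $\exp_{c(\gamma)}\!\circ\,(1-s)\exp^{-1}_{c(\gamma)}$ must be verified to depend continuously on $\gamma$ even when $c(\gamma)$ varies. Both follow from the uniform control afforded by working inside geodesically convex balls of radius $\rho$, where the exponential maps are diffeomorphisms and the Karcher mean is a continuous function of the underlying weakly convergent measures.
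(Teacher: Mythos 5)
Your argument is correct, and it rests on the same underlying mechanism as the paper's proof: a loop of sufficiently small length lies in a convex normal ball and can be contracted to a point through the exponential map, so the low sublevel sets of $L$ carry no relative first homology, and the non-triviality of $\iota_*h_1$ (Lemma~\ref{l:non_triviality}) then forces $\ell_1>0$. The implementations differ in one genuine respect. The paper argues by contradiction on a single representing relative $1$-cycle $\sigma=\sigma_1+\dots+\sigma_n$ supported in $\{L<\epsilon\}$: it picks base points $x_{i,j}\in\sigma_i(j/k)$ for a fine subdivision, chosen coherently on shared faces, interpolates them by short geodesics to obtain continuous families of centers $\gamma_i(t)$, and contracts each loop $\sigma_i(t)$ to $\gamma_i(t)$. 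This deliberately avoids any need for a canonical, continuously varying center of a short loop, at the cost of the combinatorial bookkeeping. You instead construct a global homotopy of the whole pair $(\{L\leq\ell_0\},\Lambda_0)$ into $\Lambda_0$, using the Karcher mean of the arclength measure to supply the continuous family of centers; this yields the stronger conclusion that $j^{\ell_0}_*$ vanishes on all of $H_1$, not merely that it misses $\iota_*h_1$, and it is arguably cleaner. The price is that you must invoke (or prove) the existence, uniqueness, and continuity of the Riemannian center of mass for measures supported in a small strongly convex ball, together with the continuity of your homotopy across the stratum $\Lambda_0$ in the quotient $C^\infty$ topology as the arclength measures degenerate to Dirac masses; you correctly identify these as the technical hurdles, and they are standard, but in a complete write-up they would need to be carried out or properly cited. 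Either route establishes the lemma.
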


\begin{proof}
Let us assume by contradiction that $\ell_1=0$. 
We denote by $\rho$ the injectivity radius of $(S^2,g)$, and we fix a constant $\epsilon\in(0,\rho/3)$. 
Since $\ell_1=0$, we can find a relative 1-cycle $\sigma$ representing $\iota_*h_1$ whose support is contained in the sublevel set $\{L<\epsilon\}$. In other words, $\sigma$ is the formal sum $\sigma_1+...+\sigma_n$ of singular 1-simplexes of the form $\sigma_i:[0,1]\to\{L<\epsilon\}$. Let $k\in\N$ be large enough so that, for all $i=1,...,n$ and $t_1,t_2\in[0,1]$ with $|t_1-t_2|\leq1/k$, we have 
\begin{align*}
\min\big\{d(x_1,x_2)\ |\ x_1\in\sigma_i(t_1),\ x_2\in\sigma_i(t_2) \big\}\leq\epsilon.
\end{align*}
Here, $d:S^2\times S^2\to[0,\infty)$ denotes the Riemannian distance function induced by $g$. For each $i=1,...,n$ and $j=0,...,k$, we choose a point $x_{i,j}\in \sigma_i(j/k)$. We should make these choices coherently, in the sense that $x_{i_1,j_1}=x_{i_1,j_2}$ whenever $\sigma_{i_1}(j_1/k)=\sigma_{i_2}(j_2/k)$. Since $L(\sigma_i(j/k))<\epsilon$, we have $d(x_{i,j},x)\leq\epsilon/2$ for all $x\in\sigma_i(j/k)$. Notice that $d(x_{i,j},x_{i,j+1})\leq2\epsilon$. For $i=1,...,n$, we define a continuous map $\gamma_i:[0,1]\to S^2$ such that each restriction $\gamma_i|_{[j/k,(j+1)/k]}$ is a geodesic joining $\gamma_i(j/k)=x_{i,j}$ and $\gamma_i((j+1)/k)=x_{i,j+1}$. Notice that
\begin{align*}
\max \big\{ d(\gamma_i(t),x)\ \big|\ i=1,...,n,\ t\in[0,1],\ x\in\sigma_i(t) \big\}\leq 3\epsilon < \rho.
\end{align*}
For each $i=1,...,n$, we define the continuous homotopy $\sigma_{i,s}:[0,1]\to\Lambda_*$, $s\in[0,1]$, by 
$\sigma_{i,s}(t):=\exp_{\gamma_i(t)}\big( (1-s) \exp_{\gamma_i(t)}^{-1}(\sigma_i(t)) \big)$.
The relative $1$-cycle $\sigma':=\sigma_{1,1}+...+\sigma_{n,1}$ still represents $\iota_*h_1\in H_1(\Lambda_*,\Lambda_0)$, and its support is contained in $\Lambda_0$. Therefore $\iota_*h_1=0$, which contradicts Lemma~\ref{l:non_triviality}.
\end{proof}

\begin{lem}
$\ell_1\leq \ell_2\leq \ell_3$.
\end{lem}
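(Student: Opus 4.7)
The plan is to apply the standard Lusternik--Schnirelmann monotonicity argument, exploiting the cap product identities in \eqref{e:cap_product_relations} together with the naturality of the cap product under the inclusion of pairs $j^\ell:(\{L\leq\ell\},\Lambda_0)\hookrightarrow(\Lambda_*,\Lambda_0)$.

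First I will establish $\ell_2\leq\ell_3$; the inequality $\ell_1\leq\ell_2$ will then follow by the identical argument with all indices shifted by one and the second relation of \eqref{e:cap_product_relations} replaced by the first. Fix an arbitrary $\ell>0$ lying in the set whose infimum defines $\ell_3$, so that there exists a class $c\in H_3(\{L\leq\ell\},\Lambda_0;\Z_2)$ with $j^\ell_*c=\iota_*h_3$. Naturality of the cap product for the map of pairs $j^\ell$, applied to $\kappa\in H^1(\Lambda_*;\Z_2)$, gives
\begin{equation*}
j^\ell_*\bigl((j^\ell)^*\kappa\smallfrown c\bigr)\;=\;\kappa\smallfrown j^\ell_*c\;=\;\kappa\smallfrown\iota_*h_3\;=\;\iota_*h_2,
\end{equation*}
where the last equality is the second line of \eqref{e:cap_product_relations}. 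Hence $\iota_*h_2$ lies in $j^\ell_*\bigl(H_2(\{L\leq\ell\},\Lambda_0;\Z_2)\bigr)$, so $\ell$ also belongs to the set whose infimum defines $\ell_2$. Since this holds for every $\ell$ admissible for $\ell_3$, passing to the infimum yields $\ell_2\leq\ell_3$.

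The only point that requires some care is invoking the correct form of cap product naturality for a map of pairs with a cohomology class pulled back from the ambient space; however, this is a standard textbook property of singular (co)homology, so no genuine obstacle arises. In particular, no analytic input from the length functional or from Grayson's flow is needed for this lemma; the statement is purely a consequence of the subordination relations \eqref{e:cap_product_relations}.
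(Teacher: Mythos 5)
Your proof is correct and follows essentially the same route as the paper: both deduce the inequalities from the subordination relations \eqref{e:cap_product_relations}, the paper by noting that the representing cycles $\lambda\smallfrown\sigma$ and $\lambda^2\smallfrown\sigma$ have support contained in that of $\sigma$, and you by the equivalent class-level naturality formula $j^\ell_*((j^\ell)^*\kappa\smallfrown c)=\kappa\smallfrown j^\ell_*c$. The two formulations are interchangeable here, and your conclusion that no analytic input is needed matches the paper.
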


\begin{proof}
This is a direct consequence of the cap product relations~\eqref{e:cap_product_relations}. Indeed, such relations imply that, for any relative cycle $\sigma$ representing $\iota_*h_3$ and for any cocycle $\lambda$ representing $\kappa$, the relative cycles $\lambda\smallfrown \sigma$ and $\lambda^2\smallfrown \sigma$ represent $\iota_*h_2$ and $\iota_*h_1$ respectively, and the supports of these two relative cycles are contained in the support of $\sigma$.
\end{proof}

\begin{thm}[Lusternik-Schnirelmann \cite{Lusternik:1934km}]\label{t:LS}
If $\ell_1=\ell_2$ or $\ell_2=\ell_3$, then for every open neighborhood $U\subset\Lambda_*$ of the set of simple closed geodesics with length $\ell_2$, the cohomology class $\kappa$ restricts to a non-zero cohomology class in $H^1(U;\Z_2)$. If furthermore $\ell_1=\ell_2=\ell_3$, then also the cohomology class $\kappa^2$ restricts to a non-zero cohomology class in $H^2(U;\Z_2)$.
\end{thm}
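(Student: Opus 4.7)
The plan is to argue by contradiction in both cases. I would use Grayson's property (iii) to push a representative of the highest subordinate class into a neighborhood $W$ of the critical set at level $\ell$, and then exploit the hypothesized triviality of $\kappa|_U$ (respectively $\kappa^2|_U$) to descend a lower subordinate class into the strict sublevel $V:=\{L<\ell\}$, contradicting the definition of $\ell_i$ (respectively of $\ell_1$).

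Fix $i\in\{1,2\}$, assume $\ell_i=\ell_{i+1}=\ell$, and suppose for contradiction that $\kappa$ restricts to zero in $H^1(U;\Z_2)$ for some open neighborhood $U\subset\Lambda$ of the simple closed geodesics of length $\ell$. Set $W:=V\cup U$, pick a singular $1$-cocycle $\lambda$ on $W$ (with $\Z_2$ coefficients) representing $\kappa|_W$, and choose a $0$-cochain $\mu$ on $U$ with $\delta\mu=\lambda|_U$. Let $\epsilon>0$ be provided by property (iii) for the neighborhood $U$, and select, by the definition of $\ell_{i+1}$, a relative $(i+1)$-cycle $\sigma$ representing $\iota_*h_{i+1}$ with support in $\{L\leq\ell+\epsilon\}$. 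Since $\Lambda_0\subset V$, the continuous homotopy $(\gamma,s)\mapsto\phi_{s\tau(\gamma)}(\gamma)$, extended by the identity on the constant-curve portion of the support, deforms $\sigma$ to a homologous relative cycle, still denoted $\sigma$, whose support lies in $W$. After iterated barycentric subdivision---which preserves both the relative-cycle property and the homology class---one may further arrange that $\sigma=\sigma_V+\sigma_U$, where every simplex of $\sigma_V$ maps into $V$ and every simplex of $\sigma_U$ maps into $U$.

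A chain-level computation over $\Z_2$ then yields
\begin{align*}
\lambda\smallfrown\sigma
&= \lambda\smallfrown\sigma_V + \delta\mu\smallfrown\sigma_U \\
&= \lambda\smallfrown\sigma_V + \partial(\mu\smallfrown\sigma_U) + \mu\smallfrown\partial\sigma_U,
\end{align*}
by the Leibniz identity $\partial(\mu\smallfrown\zeta)=\delta\mu\smallfrown\zeta+\mu\smallfrown\partial\zeta$, which holds without signs over $\Z_2$. The term $\lambda\smallfrown\sigma_V$ is automatically supported in $V$, and $\partial(\mu\smallfrown\sigma_U)$ is a boundary. The crucial observation is that $\partial\sigma_U=\partial\sigma+\partial\sigma_V$ modulo $2$; since $\partial\sigma$ is a chain in $\Lambda_0\subset V$ while $\partial\sigma_V$ is supported in $V$, the chain $\partial\sigma_U$ has support in $V\cap U$, and therefore $\mu\smallfrown\partial\sigma_U$ is supported in $V$ as well. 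Hence $\lambda\smallfrown\sigma$ is homologous in $W$ modulo $\Lambda_0$ to a cycle supported in $V$. By naturality of the cap product and the relations~\eqref{e:cap_product_relations}, its image in $H_i(\Lambda_*,\Lambda_0;\Z_2)$ is $\kappa\smallfrown\iota_*h_{i+1}=\iota_*h_i$. Its (compact) support therefore sits inside some $\{L\leq\ell-\delta\}$ with $\delta>0$, contradicting $\ell_i=\ell$.

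For the second statement, assuming $\ell_1=\ell_2=\ell_3=\ell$ and that $\kappa^2$ restricts to zero in $H^2(U;\Z_2)$, I would run exactly the same scheme with $\sigma$ a relative $3$-cycle representing $\iota_*h_3$, with the $2$-cocycle $\lambda\smallsmile\lambda$ on $W$ representing $\kappa^2|_W$, and with a $1$-cochain $\nu$ on $U$ satisfying $\delta\nu=(\lambda\smallsmile\lambda)|_U$. The analogous chain-level manipulation produces a relative $1$-cycle supported in $V$ that represents $\kappa^2\smallfrown\iota_*h_3=\iota_*h_1$, again contradicting $\ell_1=\ell$. The main technical obstacle throughout will be coordinating the barycentric subdivision with the relative-cycle condition so that the supports remain controlled after applying the chain-level cap-product identities; this is standard material but has to be handled with some care.
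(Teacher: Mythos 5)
Your proposal is correct and follows essentially the same route as the paper: push a representative of $\iota_*h_{i+1}$ into $\{L<\ell\}\cup U$ via Grayson's property (iii), split it by barycentric subdivision into a part in $\{L<\ell\}$ and a part in $U$, and cap with a cocycle representing $\kappa$ to drop the subordinate class $\iota_*h_i$ into the strict sublevel set, contradicting $\ell_i=\ell$. The only difference is bookkeeping: where you take a primitive $\mu$ of $\lambda|_U$ and run a chain-level Leibniz computation, the paper uses the exact sequence of the pair $(\Lambda_*,U)$ to replace $\lambda$ by a cocycle annihilating every chain supported in $U$, which kills the $U$-part of the capped cycle outright and avoids the support analysis you carry out for $\mu\smallfrown\partial\sigma_U$.
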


\begin{proof}
Assume that $\ell:=\ell_i=\ell_{i+1}$ for some $i\in\{1,2\}$, and let $U\subset\Lambda_*$ be an open neighborhood of the set of simple closed geodesics with length $\ell$. Let $\epsilon=\epsilon(U)>0$ be the constant given by property~(iii) of the curve shortening flow. By the definition of the minmax value $\ell_{i+1}$ there exists a relative cycle $\sigma$ that represents $\iota_*h_{i+1}$ and has support $K$ contained inside the sublevel set $\{L\leq \ell+\epsilon\}$. Since $K$ is compact, by property~(iii) of the curve shortening flow $\phi_t$ there exists a continuous function $\tau:K\to[0,\infty)$ such that the image of the map $\Phi:K\to\Lambda_*$, $\Phi(\gamma)=\phi_{\tau(\gamma)}(\gamma)$ is contained in $\{L<\ell\}\cup U$. Clearly, $[\Phi_*\sigma]=[\sigma]$ in $H_1(\Lambda_*,\Lambda_0;\Z_2)$. After applying sufficiently many times a barycentric subdivision to all the singular simplexes in $\Phi_*\sigma$, we obtain that each singular simplex in $\Phi_*\sigma$ is contained in $\{L<\ell\}$ or in $U$. In particular, we have $\Phi_*\sigma=\sigma'+\sigma''$, where $\sigma'$ and $\sigma''$ are singular chains (but not necessarily cycles) whose supports are contained in $\{L<c\}$ and $U$ respectively.
Now, assume by contradiction that the restriction of the cohomology class $\kappa$ to $U$ is trivial. The cohomology long exact sequence of the pair $(\Lambda_*,U)$ implies that $\kappa$ belongs to the image of the homomorphism $H^1(\Lambda_*,U;\Z_2)\to H^1(\Lambda_*;\Z_2)$ induced by the inclusion. Namely, $\kappa$ can be represented by a cocycle $\lambda$ whose kernel contains all the singular chains with support in $U$. In particular
\begin{align*}
\iota_*h_i
=\kappa\smallfrown \iota_*h_{i+1}
=[\lambda\smallfrown\sigma' + \lambda\smallfrown\sigma'']
=[\lambda\smallfrown\sigma'].
\end{align*}
But this would imply that $\iota_*h_i$ is represented by the relative cycle $\lambda\smallfrown\sigma'$ whose support is contained in the sublevel set $\{L<\ell\}$. This contradicts the fact that $\ell=\ell_i$ is the minmax value associated to $\iota_*h_i$.
The assertion concerning the case where $\ell_1=\ell_2=\ell_3$ follows by an analogous argument.
\end{proof}

\begin{rem}
As we mentioned in the introduction, we expect a curve shortening flow to be available also for reversible Finsler metrics on $S^2$, and thus Theorem~\ref{t:LS}, as well as Theorems~\ref{t:main} and~\ref{t:ellipsoid}, should extend to the reversible Finsler setting.
\end{rem}

\section{Proofs of the theorems}
\label{s:proofs}

Theorem~\ref{t:main} is a consequence of the following statement.
\begin{thm}
Let $(S^2,g)$ be a Riemannian 2-sphere whose Lusternik-Schnirelmann minmax values satisfy $\ell_1=\ell_2=\ell_3=:\ell$. Then, every geodesic of $(S^2,g)$ is a simple closed geodesic of length $\ell$.
\end{thm}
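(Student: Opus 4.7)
My plan is to deduce the theorem from the two cohomological conclusions of Theorem~\ref{t:LS}: writing $G$ for the set of simple closed geodesics of length $\ell$, one has $\kappa|_U\ne 0$ and $\kappa^2|_U\ne 0$ in $H^*(U;\Z_2)$ for every open neighborhood $U\subset\Lambda_*$ of $G$. I would convert these into two geometric covering properties of $G$, and then conclude by a short limit argument based on the injectivity radius.

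The first covering property is that every $p\in S^2$ lies on some $\eta\in G$. The key observation is that on the open set $\Lambda^p:=\{\gamma\in\Lambda_*: p\notin\gamma\}$ the double cover $\pi:C\to\Lambda_*$ underlying $\kappa$ admits a continuous section, sending $\gamma\in\Lambda^p\cap\Lambda$ to the connected component of $S^2\setminus\gamma$ not containing $p$ and each constant curve $\gamma_q\in\Lambda^p\cap\Lambda_0$ to the ``degenerate disk'' element of $C_{\gamma_q}$; continuity across $\Lambda_0$ is checked by taking the reference point $x$ in the construction of the topology of $C$ (Section~2) to be $p$. Hence $\kappa|_{\Lambda^p}=0$, and if no $\eta\in G$ passed through $p$, then $\Lambda^p$ would be an open neighborhood of $G$ contradicting Theorem~\ref{t:LS}. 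The second covering property is that every pair of distinct points $p,q\in S^2$ lies on a common element of $G$: by the cup-length inequality---if an open cover $X=A\cup B$ satisfies $\alpha|_A=\alpha|_B=0$ for some $\alpha\in H^1(X;\Z_2)$, then $\alpha^2=0$ in $H^2(X;\Z_2)$, as follows by lifting $\alpha$ to relative classes in $H^1(X,A)$ and $H^1(X,B)$ whose cup product lies in $H^2(X,A\cup B)=0$---applied with $X=\Lambda^p\cup\Lambda^q$ and $\alpha=\kappa$, we obtain $\kappa^2|_{\Lambda^p\cup\Lambda^q}=0$, so if no element of $G$ contained both $p$ and $q$ we would again contradict Theorem~\ref{t:LS}.

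Finally, I would deduce that every geodesic lies in $G$. Fix $(p,v)\in SS^2$, let $\gamma$ be the arclength geodesic with initial condition $(p,v)$, and for each $\epsilon\in(0,\mathrm{inj}(S^2,g))$ apply the second covering property to produce $\eta_\epsilon\in G$ through $p$ and $q_\epsilon:=\gamma(\epsilon)$. Parametrizing $\eta_\epsilon$ by arclength with $\eta_\epsilon(0)=p$ and letting $s_\epsilon\in(0,\ell/2]$ be the smallest positive time with $\eta_\epsilon(s_\epsilon)=q_\epsilon$ (after possibly reversing direction), the only geodesic arc from $p$ of length less than $\mathrm{inj}(S^2,g)$ reaching $q_\epsilon$ being the length-minimizing arc $\gamma|_{[0,\epsilon]}$, one has $s_\epsilon\in\{\epsilon\}\cup[\mathrm{inj}(S^2,g),\ell/2]$; the first case forces $\eta_\epsilon=\gamma$ by uniqueness of geodesic extensions, placing $\gamma$ in $G$. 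The hard part is to exclude the residual scenario $s_\epsilon\ge\mathrm{inj}(S^2,g)$ for all small $\epsilon$: a $C^\infty$-convergent subsequence $\eta_{\epsilon_n}\to\eta_\infty$ then produces a closed geodesic $\eta_\infty$ of length $\ell$ through $p$ at times $0$ and $s_\infty\in[\mathrm{inj}(S^2,g),\ell/2]$, necessarily primitive (a proper multiple-cover limit being prevented by the incompatibility of winding numbers on a surface) and hence with a transverse self-intersection at $p$. Disposing of this figure-eight scenario---either by a finer analysis of the degeneration of $G$ inside the space of closed geodesics of length $\ell$, or by first showing that every geodesic of $(S^2,g)$ is closed and invoking the Gromoll-Grove theorem recalled in the introduction to conclude that $(S^2,g)$ is Zoll, with common length necessarily equal to $\ell$ since $G$ is nonempty---would complete the proof.
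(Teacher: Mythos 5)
Your route is genuinely different from the paper's, and its cohomological core is sound. The paper introduces the circle bundle $\EE=\{(\gamma,x):x\in\gamma\}$ over $\Lambda$ and the evaluation map $\ev:\EE\to\PT S^2$, and uses the Gysin sequence over the great circles to prove the identity $\kappa^2=\pi_*\ev^*\nu$; Theorem~\ref{t:LS} then forces $\ev$ to remain surjective over any neighborhood $U$ of the critical set, i.e.\ \emph{every tangent line} is tangent to a simple closed geodesic of length $\ell$, which finishes the proof with no limiting argument at all. You instead extract from $\kappa^2|_U\neq0$ the weaker statement that \emph{every pair of points} lies on a common element of $G$, via the standard relative cup-product (cup-length) argument applied to $\Lambda^p\cup\Lambda^q$; this step is correct, and note that you do not even need your explicit section of the double cover: $\Lambda^p$ is contractible (this is exactly the paper's proof of the companion theorem on $\ell_1=\ell_2$ or $\ell_2=\ell_3$), so \emph{every} class vanishes on it, which also disposes of the ambiguity in the choice of $\kappa$ (the paper only fixes $\kappa$ up to the kernel of $\iota^*$). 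The price of the weaker covering statement is the analytic limit argument at the end, and that is where your proof is incomplete: you explicitly leave open the ``figure-eight'' scenario in which $\eta_{\epsilon_n}\to\eta_\infty$ and $\eta_\infty$ is a primitive closed geodesic of length $\ell$ with a transverse self-intersection at $p$.

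That residual case is a genuine gap as written, but it is not hard to close, and you should not need Gromoll--Grove or any finer degeneration analysis: a $C^1$-limit of embedded loops cannot have a transverse self-intersection, because transverse intersections of arcs are stable under $C^1$-small perturbations. Concretely, fix $\delta\in(0,\mathrm{inj}(S^2,g)/2)$; since $s_{\epsilon_n}\in[\mathrm{inj}(S^2,g),\ell/2]$, the parameter intervals $[-\delta,\delta]$ and $[s_{\epsilon_n}-\delta,s_{\epsilon_n}+\delta]$ are disjoint in $\R/\ell\Z$, and the two arcs $\eta_{\epsilon_n}|_{[-\delta,\delta]}$ and $\eta_{\epsilon_n}|_{[s_{\epsilon_n}-\delta,s_{\epsilon_n}+\delta]}$ converge in $C^1$ to two arcs of $\eta_\infty$ crossing transversally at $p$; hence for $n$ large they intersect, contradicting the injectivity of $\eta_{\epsilon_n}$. (For completeness you should also record why the self-intersection at $p$ must be transverse once multiple covers are excluded: if $\dot\eta_\infty(s_\infty)=\dot\eta_\infty(0)$ then $s_\infty<\ell$ is a period and $\eta_\infty$ is not primitive, while $\dot\eta_\infty(s_\infty)=-\dot\eta_\infty(0)$ forces $\eta_\infty(s_\infty-t)=\eta_\infty(t)$ and hence $\dot\eta_\infty(s_\infty/2)=0$, which is absurd; your winding-number exclusion of multiple covers is fine.) With this paragraph added, your argument becomes a complete and legitimately different proof, trading the paper's Gysin-sequence computation on $\PT S^2$ for a cup-length argument plus elementary geodesic geometry.
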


\begin{proof}
We define
$\EE
:=
\big\{
(\gamma,x)\in\Lambda\times S^2\ \big|\ x\in\gamma
\big\}$, which is the total space of the circle bundle $\pi:\EE\to\Lambda$, $\pi(\gamma,x)=\gamma$. We denote by $\PT S^2$ the projectivization of the tangent bundle of $S^2$, that is, $\PT S^2:= (\Tan S^2\setminus\{\mbox{0-section}\})/\sim$,
where $\sim$ is the equivalence relation $(x,v)\sim(x,\lambda v)$ for each real number $\lambda\neq0$. We have a continuous evaluation map 
\begin{align*}
\ev:\EE\to\PT S^2,
\qquad
\ev(\gamma,x)=\Tan_x\gamma.
\end{align*}
Let $G\subset\Lambda$ be the set of great circles in $S^2$. Notice that, if $\iota:E\to\Lambda$ is the map introduced in~\eqref{e:iota}, we have $G=\iota(\{\mbox{0-section}\})\cong\RP^2$. In particular, if  $\kappa\in H^1(\Lambda_*;\Z_2)$ is the cohomology class of~\eqref{e:kappa}, the restriction $\kappa^2|_{G}$ is a generator of $H^2(G;\Z_2)\cong\Z_2$. The Gysin sequence of the restricted circle bundle $\pi:\EE|_G\to G$ reads
\begin{align*}
\xymatrix@R=8pt@C-6pt{
  H^3(G;\Z_2) \ar[r]^{\pi^*} \ar@{=}[d] &
  H^3(\EE|_G;\Z_2) \ar[r]^{\pi_*} &
  H^2(G;\Z_2) \ar[r] \ar@{=}[d] &
  H^4(G;\Z_2) \ar@{=}[d]\\
  0 &  & \Z_2 & 0 
}
\end{align*}
and therefore we have an isomorphism $\displaystyle\pi_*:H^3(\EE|_G;\Z_2)\toup^{\cong} H^2(G;\Z_2)$. The evaluation map restricts to a homeomorphism $\ev|_{\EE|_{G}}:\EE|_{G}\to\PT S^2$. Therefore $\kappa^2|_{G}=\pi_*\ev|_{\EE|_{G}}^*\nu$, where $\nu$ is the generator of $H^3(\PT S^2;\Z_2)$. This, together with the commutative diagram
\begin{align*}
\xymatrix{
  \EE|_G\, \ar@{^{(}->}[r] \ar[d]^{\pi} &
  \EE \ar[r]^{\ev\ \ \ \ } \ar[d]^{\pi} &
  \PT S^2\\
  G\, \ar@{^{(}->}[r] & \Lambda  &  
}
\end{align*}
readily implies that $\kappa^2=\pi_*\ev^*\nu\neq 0$ in $H^2(\Lambda;\Z_2)$.

Now, assume by contradiction that there exists $y=(x,[v])\in\PT S^2$ such that no simple closed geodesic of $(S^2,g)$ is tangent to $v$. The subset 
\begin{align*}
U:=\pi(\ev^{-1}(\PT S^2\setminus\{y\}))
\end{align*}
is therefore an open neighborhood of the set of simple closed geodesics with length $\ell$. We denote by $j:U\hookrightarrow \Lambda$ and $\widetilde j:\EE|_U\hookrightarrow \EE$ the inclusions, so that we have the commutative diagram
\begin{equation}\label{e:final_diagram}
\begin{split}
\xymatrix{
  \EE|_U\, \ar@{^{(}->}[r]^{\widetilde j} \ar[d]^{\pi} &
  \EE \ar[r]^{\ev\ \ \ \ } \ar[d]^{\pi} &
  \PT S^2\\
  U\, \ar@{^{(}->}[r]^j & \Lambda  &  
} 
\end{split}
\end{equation}
Notice that $\ev\circ\widetilde j(\EE|_U)=\PT S^2\setminus\{y\}$. Since $\ell=\ell_1=\ell_2=\ell_3$, Theorem~\ref{t:LS} implies that $j^*\kappa^2\neq 0$ in $H^2(U;\Z_2)$. This, together with~\eqref{e:final_diagram}, implies that
\begin{align*}
0\neq j^*\kappa^2 = j^*\pi_*\ev^*\nu = \pi_*\widetilde{j}^*\ev^*\nu = \pi_*(\ev\circ\widetilde{j})^*\nu,
\end{align*}
and therefore $(\ev\circ\widetilde{j})^*\nu\neq0$ in $H^3(\EE|_{U};\Z_2)$. In particular, the homology homomorphism  $(\ev\circ\widetilde{j})_*:H_3(\EE|_U)\to H_3(\PT S^2;\Z_2)$ is surjective, and we conclude that the map $\ev\circ\widetilde{j}$ must be surjective as well, which is a contradiction.
\end{proof}

Theorem~\ref{t:ellipsoid} is a consequence of the following statement.

\begin{thm}
Let $(S^2,g)$ be a Riemannian 2-sphere whose Lusternik-Schnirelmann minmax values satisfy $\ell_1=\ell_2$ or $\ell_2=\ell_3$. Then every $x\in S^2$ lies on a simple closed geodesic of $(S^2,g)$ of length $\ell_2$.
\end{thm}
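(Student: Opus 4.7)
The plan is by contradiction. Suppose some $x\in S^2$ avoids every simple closed geodesic of length $\ell_2$, and set
\[ U:=\{\gamma\in\Lambda_*\ |\ x\notin\gamma\}\subset\Lambda_*. \]
Openness of $U$ follows because the condition $x\in\gamma$ persists under uniform limits, and $U$ is a neighborhood of the set of simple closed geodesics of length $\ell_2$ by the contradiction hypothesis. Theorem~\ref{t:LS}, applied in the case $\ell_1=\ell_2$ or $\ell_2=\ell_3$, then yields $\kappa|_U\neq 0$ in $H^1(U;\Z_2)$. The rest of the argument contradicts this by proving $\kappa|_U=0$.

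The key observation is that the double cover $C\to\Lambda_*$ admits a continuous section over $U$, so $C|_U\to U$ is trivial. Since $\kappa$ may be taken to be the class $A\in H^1(\Lambda_*;\Z_2)$ classifying $C$---the proof of Lemma~\ref{l:pi_1_injective} computes $A\circ\iota_*=\omega$, exhibiting $A$ as an admissible choice in~\eqref{e:kappa}---this triviality yields $\kappa|_U=0$. I would construct the section $\sigma:U\to C$ by
\[
\sigma(\gamma):=\begin{cases}(\gamma,Q_\gamma),&\gamma\in U\cap\Lambda,\\ (\gamma,1),&\gamma\in U\cap\Lambda_0,\end{cases}
\]
where $Q_\gamma\in\pi_0(S^2\setminus\gamma)$ is the component containing $x$ (which exists precisely because $\gamma$ avoids $x$).

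The one step to verify carefully is continuity of $\sigma$. The topology of $C$ near a point $(\gamma,Q)$ is defined using an auxiliary point $x'\in S^2$ that must be chosen off the support of $\gamma$ (and, when $\gamma\in\Lambda$, inside the component $Q$). The decisive choice is $x':=x$: this is permitted exactly because $\gamma\in U$ (and, at embedded circles, because $x\in Q_\gamma$ by construction), and with this choice the basic neighborhood of $\sigma(\gamma)$ in $C$ coincides with the image under $\sigma$ of a small neighborhood of $\gamma$ in $U$. Once this is established the global section exists, $\kappa|_U=0$, and the contradiction with Theorem~\ref{t:LS} completes the proof.
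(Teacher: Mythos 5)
Your proof is correct, and it reaches the contradiction by a genuinely different mechanism than the paper. Both arguments set up the same open set $U=\{\gamma\in\Lambda_*\mid x\notin\gamma\}$, note it is a neighborhood of the simple closed geodesics of length $\ell_2$, and invoke Theorem~\ref{t:LS} to get $j^*\kappa\neq0$ in $H^1(U;\Z_2)$. The paper then kills \emph{all} of $H^1(U;\Z_2)$ at once: a smooth deformation retraction of $S^2\setminus\{x\}$ onto the antipode $-x$ through embeddings $r_t$ induces a contraction $R_t(\gamma)=r_t\circ\gamma$ of $U$ onto the constant loop at $-x$, so $U$ is contractible. You instead kill only the one relevant class, by observing that $\kappa$ may be chosen to be the monodromy class $A$ of the double cover $C\to\Lambda_*$ --- a legitimate choice, since the proof of Lemma~\ref{l:pi_1_injective} shows $A\circ\iota_*$ is nontrivial, i.e.\ $\iota^*A=\omega$, and Theorem~\ref{t:LS} uses $\kappa$ only through the relations~\eqref{e:cap_product_relations}, which depend only on $\iota^*\kappa=\omega$ --- and then trivializing $C$ over $U$ via the section ``component of $S^2\setminus\gamma$ containing $x$.'' Your continuity check, taking the auxiliary point in the definition of the topology on $C$ to be $x$ itself, is exactly the verification needed, and a double cover with a continuous section has trivial monodromy, so $j^*A=0$ on $H_1(U)$ and hence in $H^1(U;\Z_2)$ by universal coefficients. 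The paper's route is more economical and works for any admissible choice of $\kappa$; yours identifies $\kappa$ concretely as the first Stiefel--Whitney class of the covering $C$, which is worthwhile extra information and would still apply in situations where $U$ fails to be contractible but the cover nevertheless splits over it.
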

\begin{proof}
Assume by contradiction that $\ell_1=\ell_2$ or $\ell_2=\ell_3$, but there exists $x\in S^2$ such that no simple closed geodesic of length $\ell_2$ passes through $x$. The subset
$U:=\big\{\gamma\in\Lambda_*\ \big|\ x\not\in\gamma\big\}$
is therefore an open neighborhood of the set of simple closed geodesics of length $\ell_2$. We claim that $U$ is contractible. Indeed, consider a smooth deformation retraction $r_t:S^2\setminus\{x\}\to S^2\setminus\{x\}$ such that $r_0=\mathrm{id}$, $r_t$ is an embedding for each $t\in[0,1)$, and $r_1(y)=-x$ for all $y\in S^2\setminus\{x\}$. This induces a deformation retraction $R_t:U\to U$, $R_t(\gamma)=r_t(\gamma)$, whose time-1 map $R_1$ contracts $U$ at the constant loop at $-x$. In particular, $H^1(U;\Z_2)=0$. However, if $\kappa\in H^1(\Lambda_*;\Z_2)$ is the cohomology class of~\eqref{e:kappa}, Theorem~\ref{t:LS} implies that $j^*\kappa\neq 0$ in $H^1(U;\Z_2)$, and gives a contradiction.
\end{proof}

\bibliography{_biblio}
\bibliographystyle{amsalpha}

\end{document}